\newtheorem{Theorem}{Theorem}[section]
\newtheorem{Lemma}{Lemma}[section]
\newtheorem{Question}{Question}[section]
\numberwithin{equation}{section}
\newcommand{\supp}{\mathrm{supp}}
\newproof{proof}{Proof}
\journal{Journal of Differential Equations}
\begin{document}

\begin{frontmatter}

\title{
An explicit compact universal space for real flows
}

\author[mymainaddress]{Yonatan Gutman}
\ead{y.gutman@impan.pl}

\author[mymainaddress]{Lei Jin\corref{mycorrespondingauthor}}
\cortext[mycorrespondingauthor]{Corresponding author}
\ead{jinleim@mail.ustc.edu.cn}

\address[mymainaddress]{Institute of Mathematics, Polish Academy of Sciences, ul. \'Sniadeckich 8, 00-656 Warszawa, Poland}

\begin{abstract}
The Kakutani-Bebutov Theorem (1968) states that any compact
metric real flow whose fixed point set is
homeomorphic to a subset of $\mathbb{R}$ embeds into
the Bebutov flow,
the $\mathbb{R}$-shift on $C(\mathbb{R},[0,1])$. An interesting fact is that
this universal space is a function space.
However, it is not compact, nor locally compact.
We construct an explicit countable product of compact subspaces of
the Bebutov flow
which is a universal space for all compact metric real flows,
with no restriction;
namely,
into which any compact metric real flow embeds.
The result is compared to previously known universal spaces.
\end{abstract}

\begin{keyword}
Bebutov-Kakutani theorem\sep
compact metric flow\sep
Bebutov flow\sep
Bernstein space\sep
universal space\sep
compactification\sep
equivariant embedding

\MSC[2010] 37B05\sep 54H20
\end{keyword}

\end{frontmatter}


\section{Introduction}

A tuple $(X,G)$ is called a {\it topological dynamical system} if
$X$ is a topological space, $G$ is a topological group with the unit $e_G$, and
$\Gamma:G\times X\rightarrow X,(g,x)\mapsto gx$ is a continuous mapping
satisfying $\Gamma(e_G,x)=x$ and $\Gamma(g_1,\Gamma(g_2,x))=\Gamma(g_1g_2,x)$
for each $g_1,g_2\in G$ and $x\in X$.
The most studied topological dynamical systems are those where
$G=\mathbb{Z}$ or $G=\mathbb{R}$.
For the case $G=\mathbb{R}$
we say that $(X,\mathbb{R})$ is a {\it real flow} (or a {\it flow}, for short).
In this paper, we concentrate on real flows.
A flow $(X,\mathbb{R})$ is said to be a compact metric flow (resp. a separable metric flow, etc.)
if $X$ is a compact metric space (resp. a separable metric space, etc.).
We will mainly focus on compact metric flows.

There are many examples of real flows.
In fact,
there are natural constructions for passing from a $\mathbb{Z}$-action to a flow,
and vice versa \cite[Section 1.11]{BS}.
Let $(X,T)$ be a $\mathbb{Z}$-action, meaning that $X$ is a compact metric space and $T$ is a
homeomorphism of $X$. Let $f:X\rightarrow\mathbb{R}^+$ be a continuous function
bounded away from $0$.
Consider the quotient space $$S_fX=\{(x,t)\in X\times\mathbb{R}^+:0\le t\le f(x)\}/\sim,$$
where $\sim$ is the equivalence relation $(x,f(x))\sim(Tx,0)$.
The {\it suspension} (flow) over $(X,T)$ generated by the {\it roof function} $f$ is
the flow $(S_fX,(\psi_t)_{t\in\mathbb{R}})$ given by
$$\psi_t(x,s)=(T^nx,s') \text{ for } t\in\mathbb{R} \text{ and } (x,s)\in S_fX,$$
where $n$ and $s'$ satisfy
$$\sum_{i=0}^{n-1}f(T^ix)+s'=t+s, \,\; 0\le s'\le f(T^nx).$$
In other words, flow along $\{x\}\times\mathbb{R}^+$ to $(x,f(x))$ then continue from
$(Tx,0)$ (which is the same as $(x,f(x))$) along $\{Tx\}\times\mathbb{R}^+$ and so on.
Since $X$ is a compact metric space, the suspension is also a compact metric flow.

\medskip

It is natural to seek one compact metric flow whose subflows exhaust all compact metric flows up to isomorphism.
Moreover, the simpler this flow is the better.
Formally, let $(X, \mathbb{R})=(X, (\varphi_r)_{r\in\mathbb{R}})$
and $(Y, \mathbb{R})=(Y, (\phi_r)_{r\in\mathbb{R}})$ be two flows.
We say that $(Y,\mathbb{R})$ {\it embeds} into $(X,\mathbb{R})$ if there exists
an $\mathbb{R}$-equivariant homeomorphism of $Y$ onto a subspace of $X$; that is,
there is a homeomorphism $f: Y\rightarrow f(Y)\subset X$ such that
$f\circ \phi_r=\varphi_r\circ f$ for all $r\in\mathbb{R}$.
Such a map $f$ is called an {\it embedding} of the flow $(Y,\mathbb{R})$ into the flow $(X,\mathbb{R})$.
We note that in this definition $f(Y)$ is equipped with the subspace topology.
Let $\mathcal{C}$
be a family of flows.
The flow $(X,\mathbb{R})$ is said to be a {\it universal flow} (or sometimes said to be a
{\it universal space}) for $\mathcal{C}$ if all
$(Y,\mathbb{R})\in\mathcal{C}$ embed into $(X,\mathbb{R})$.
The more canonical the universal space is the more useful is its universality.

Natural candidates for universal flows are function spaces.
Let $C(\mathbb{R},[0,1])$ be the metric space of all continuous functions $f:\mathbb{R}\rightarrow[0,1]$
equipped with the topology of uniform convergence on compact sets.
Let $\mathbb{R}=(\varphi_r)_{r\in\mathbb{R}}$ act on $C(\mathbb{R},[0,1])$ as follows (the $\mathbb{R}$-shift):
$(\varphi_rf)(x)=f(x+r)$ for each $x,r\in\mathbb{R}$ and $f\in C(\mathbb{R},[0,1])$.
We denote this flow by $(C(\mathbb{R},[0,1]),\mathbb{R})$, the $\mathbb{R}$-shift on $C(\mathbb{R},[0,1])$.
\footnote{If one tries to endow $C(\mathbb{R},[0,1])$ with the topology induced by
the metric $d(f,g)=\sup_{x\in\mathbb{R}}|f(x)-g(x)|$ where $f,g\in C(\mathbb{R},[0,1])$,
then $(C(\mathbb{R},[0,1]),\mathbb{R})$ is no longer a topological dynamical system.}
The flow $(C(\mathbb{R},[0,1]),\mathbb{R})$ is called the Bebutov flow \cite[Chapter 13]{A}.
The terminology differs from the one appearing in very early articles \cite{N,Ko}.
In the present paper, we follow Auslander's terminology \cite[Chapter 13]{A}.
The following is the famous Kakutani-Bebutov theorem.

\begin{Theorem}\label{KB}
Let $(X,\mathbb{R})=(X,(\varphi_r)_{r\in\mathbb{R}})$ be a compact metric flow
and $F=\{x\in X:\varphi_rx=x \text{ for all } r\in\mathbb{R}\}$ be the fixed point set of the flow.
If $F$ is homeomorphic to a subset of $\mathbb{R}$, then $(X,\mathbb{R})$ embeds into
the Bebutov flow $(C(\mathbb{R},[0,1]),\mathbb{R})$.
\end{Theorem}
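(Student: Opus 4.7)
My plan is to produce the embedding as an orbit map $\Phi_f : X \to C(\mathbb{R},[0,1])$, $\Phi_f(x)(r) = f(\varphi_r x)$, for a carefully chosen continuous $f : X \to [0,1]$. For any such $f$, the map $\Phi_f$ is $\mathbb{R}$-equivariant because $\Phi_f(\varphi_s x)(r) = f(\varphi_{r+s} x) = \Phi_f(x)(r+s)$, and it is continuous into the compact-open topology on $C(\mathbb{R},[0,1])$ thanks to compactness of $X$ together with joint continuity of the flow. Since $X$ is compact Hausdorff, injectivity of $\Phi_f$ automatically upgrades it to a topological embedding. The theorem therefore reduces to producing $f$ for which $\Phi_f$ is injective.

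I would first encode the fixed point set. The hypothesis provides a topological embedding $\iota : F \hookrightarrow \mathbb{R}$, which after rescaling we may assume lands in $[\tfrac13,\tfrac23]$; applying Tietze's extension theorem (valid because $F$ is closed in the normal space $X$) yields some $f_0 : X \to [\tfrac13,\tfrac23]$ with $f_0|_F = \iota$. Every candidate $f$ in the sequel satisfies $f|_F = \iota$. A critical observation is that a non-fixed point has its entire $\mathbb{R}$-orbit contained in $X\setminus F$: if $\varphi_r x$ were fixed for some $r$, then applying $\varphi_{-r}$ would force $x$ itself to be fixed. Hence the perturbations needed to distinguish non-fixed points can always be performed on the open set $X\setminus F$, leaving the constraint $f|_F = \iota$ undisturbed.

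The injectivity analysis then splits into four cases for $x \neq y$: (i) $x, y \in F$, handled at once by injectivity of $\iota$; (ii) exactly one of $x, y$ is fixed; (iii) both non-fixed on distinct orbits; and (iv) both non-fixed on the same orbit, so $y = \varphi_t x$ for some $t$ that is not a period of $x$. In each of (ii)--(iv), what must be arranged is the existence of some $r \in \mathbb{R}$ with $f(\varphi_r x) \neq f(\varphi_r y)$, and the relevant points lie in $X \setminus F$ where $f$ can be locally bumped. I would assemble these local perturbations via a Baire category argument in the complete metric space $\mathcal{F} = \{f \in C(X,[0,1]) : f|_F = \iota\}$ with the uniform norm. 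For a countable family $\{(B_n, B_n')\}$ of pairs of basic open sets of $X$ with disjoint closures whose collection separates distinct points, set
\[
\mathcal{U}_n = \{f \in \mathcal{F} : \Phi_f(\overline{B_n}) \cap \Phi_f(\overline{B_n'}) = \emptyset\}.
\]
Each $\mathcal{U}_n$ is open by compactness; any $f$ in the intersection $\bigcap_n \mathcal{U}_n$ makes $\Phi_f$ injective.

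The main obstacle is proving density of $\mathcal{U}_n$ in case (iv), where one must destroy an \emph{a priori} identity $\Phi_f(x)(r) = \Phi_f(x)(r+t)$ holding for all $r \in \mathbb{R}$ by a perturbation localised to a small flow-box around a chosen non-fixed point on the orbit. I expect to handle this by taking the flow-box sufficiently thin in the flow direction that the orbit of $x$ meets its support in only a short time window; a small bump function supported there then alters $\Phi_f(x)$ on a narrow interval, yielding $\Phi_f(x)(r_0) \neq \Phi_f(x)(r_0 + t) = \Phi_f(y)(r_0)$ at some $r_0$, while staying small enough to leave the finitely many previously arranged separations intact.
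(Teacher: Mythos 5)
First, a point of reference: the paper itself contains no proof of Theorem \ref{KB}; it is quoted as the classical Kakutani--Bebutov theorem, with references to Kakutani \cite{K} and Auslander \cite[Chapter 13]{A}, so your attempt has to be measured against the classical argument rather than anything in this paper. Your frame does coincide with that argument's starting point: the embedding is the orbit map $\Phi_f(x)(r)=f(\varphi_r x)$ of a single continuous $f$ with $f|_F=\iota$ (Tietze), equivariance and continuity are as you say, compactness of $X$ upgrades injectivity to an embedding, and everything reduces to producing one separating function $f$. The Baire-category packaging is also sound as far as it goes: $\mathcal{F}$ is a complete metric space, each $\mathcal{U}_n$ is open by the compactness argument you indicate, and any $f\in\bigcap_n\mathcal{U}_n$ has injective orbit map.

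The genuine gap is the density of $\mathcal{U}_n$, and it is not a detail---it is the entire content of the theorem. Membership in $\mathcal{U}_n$ demands a single perturbation of size at most $\epsilon$ that separates \emph{every} pair in the compact, generally uncountable, set $\overline{B_n}\times\overline{B_n'}$ \emph{simultaneously}; your cases (ii)--(iv) and the bump construction only ever separate one prescribed pair $(x,y)$. (For a single pair the argument is in fact easier than your case (iv) suggests: if $\Phi_f(x)=\Phi_f(y)$ then in particular $f(x)=f(y)$, so any bump at $x$ whose support misses $y$ and $F$ already gives $\Phi_{f'}(x)(0)\neq\Phi_{f'}(y)(0)$; no thin flow-box is needed. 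Incidentally, the justification you do give is wrong: shrinking the flow-box bounds the duration of \emph{each} visit of the orbit of $x$ to the support, not the \emph{number} of visits, which is infinite whenever $x$ is recurrent, so the perturbation does not alter $\Phi_f(x)$ on just ``a narrow interval.'') The obstruction to density is that a bump repairing one bad pair can create new bad pairs elsewhere in $\overline{B_n}\times\overline{B_n'}$; the bad set moves as you perturb, and no pointwise or finite inductive scheme controls an uncountable family of constraints---indeed your closing remark about preserving ``the finitely many previously arranged separations'' shows the argument is set up as though only finitely many pairs were in play. Closing this gap is precisely where the classical proof does its work: by the Whitney--Bebutov local cross-section theorem, a compact set of non-fixed points is covered by finitely many flow-boxes $S\times[-\delta,\delta]$, and one perturbs by functions of the form $h(\varphi_u s)=\gamma(u)\,\theta(s)$, injecting the cross-section coordinate $s$ into the \emph{shape} of the orbit functions (possible even for infinite-dimensional $S$ because the time axis is a continuum---this is why the theorem needs no dimension hypothesis); a separate, delicate analysis is required near $F$, where flow-boxes degenerate and where the hypothesis that $F$ embeds in $\mathbb{R}$ is used for more than your case (i). None of this machinery appears in your proposal, so as it stands you have established only the standard reductions, not the theorem.
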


\medskip

The embeddability of a compact metric flow in $(C(\mathbb{R},[0,1]),\mathbb{R})$
was first proven by Bebutov (see \cite{B,N}) under the assumption that
the fixed point set has at most two points. The general version was proven by Kakutani \cite{K} in 1968
(see also \cite[Chapter 13]{A}).
In particular,
any compact metric flow with no fixed points
embeds into $(C(\mathbb{R},[0,1]),\mathbb{R})$.
This is a wonderful theorem,
\footnote{The condition appearing in the theorem is necessary as
the fixed point set of $(C(\mathbb{R},[0,1]),\mathbb{R})$ consists of all constants in $[0,1]$.
Thus,
$(X,\mathbb{R})$ embeds into $(C(\mathbb{R},[0,1]),\mathbb{R})$
if and only if
$F$ (topologically) embeds into $\mathbb{R}$.}
and it is very satisfying that $C(\mathbb{R},[0,1])$ of
the universal space is a function space.
However, we remark that
in this theorem, the compact metric flow $(X,\mathbb{R})$ needs to satisfy some additional conditions;
meanwhile, the universal space $(C(\mathbb{R},[0,1]),\mathbb{R})$
is not compact, nor locally compact.
This is unfortunate.
The first is unfortunate because the theorem does not encompass all compact metric real flows.
The second is unfortunate as one would like the universal space to belong to
the family for which it is universal;
namely, we would like the universal space to be a compact metric real flow.

\medskip

Our aim is to find a compact metric universal flow for all compact metric flows;
at the same time, as {\it simple} or {\it canonical} as possible.
Indeed, we also would like our universal flow to be as ``close'' to the Bebutov flow $(C(\mathbb{R},[0,1]),\mathbb{R})$
as possible. However, trivially the Bebutov flow is not universal, nor is any finite product of it.
\footnote{The fixed point set of $(C(\mathbb{R},[0,1]^N),\mathbb{R})$ (where $N$ is a natural number)
is $[0,1]^N$ into which not every compact metric space embeds.}
So we turn to considering the countable product $(C(\mathbb{R},[0,1]^\mathbb{N}),\mathbb{R})$.
More precisely, let $C(\mathbb{R},[0,1]^\mathbb{N})$ be the metric space of all continuous functions
$f:\mathbb{R}\rightarrow[0,1]^\mathbb{N}$ endowed with the metric
$$d(f,g)=\sum_{n\in\mathbb{N}}\sum_{N\in\mathbb{N}}\frac{||f_n-g_n||_{L^\infty([-N,N])}}{2^{n+N}}$$
for every $f=(f_n)_{n\in\mathbb{N}},g=(g_n)_{n\in\mathbb{N}}\in C(\mathbb{R},[0,1]^\mathbb{N})$,
and let $\mathbb{R}=(\varphi_r)_{r\in\mathbb{R}}$ act on $C(\mathbb{R},[0,1]^\mathbb{N})$ as follows:
for any $r\in\mathbb{R}$ and any $f\in C(\mathbb{R},[0,1]^\mathbb{N})$,
$(\varphi_rf)(x)=f(x+r)$ for all $x\in\mathbb{R}$.
It is well known
that all compact metric spaces embed into the compact metric space $[0,1]^\mathbb{N}$.
This can be used to show in a straightforward manner that
$(C(\mathbb{R},[0,1]^\mathbb{N}),\mathbb{R})$ is a universal flow
for all compact metric flows (for exact details see Section 2).
Note that this flow is a complete separable metric flow; however,
it is not compact nor locally compact.

In this paper, we find an explicit countable product of compact subspaces of
the Bebutov flow $(C(\mathbb{R},[0,1]),\mathbb{R})$
which is a universal space for all compact metric flows,
with no restriction.
The key idea for the solution is to find a countable family of
compact function spaces which allow to ``separate points'' equivariantly
for each compact metric flow.
These spaces surprisingly appeared in a related problem for $\mathbb{Z}$-actions.

For $\mathbb{Z}$-actions $(X,T)$
one can similarly consider the embeddability for $(X,T)$.
It is easy to see that the shift on
$([0,1]^\mathbb{N})^\mathbb{Z}$ is universal.
But an important and much more difficult question is:
for which spaces the shift on $([0,1]^N)^\mathbb{Z}$
(where $N\in\mathbb{N}$) is universal?
Here we briefly state some related results.
Under the conditions that the space $X$ is finite dimensional and
$(X,T)$ has no periodic points, Jaworski \cite{J} showed in 1974 that
$(X,T)$ embeds into the shift on $[0,1]^{\mathbb{Z}}$.
Then the first author \cite{G} extended this result to the case of finite dimensional
systems having reasonable amount of periodic points.
We mention that the embedding problem is very close to the theory of mean dimension
which was introduced by Gromov \cite{Gr} and systematically developed by
Lindenstrauss and Weiss \cite{L,LW}.
Recently, Tsukamoto and the first author \cite{GT} proved, using Fourier and complex analysis,
that any minimal system of mean dimension strictly less than $N/2$
embeds into the shift on $([0,1]^N)^\mathbb{Z}$.
We remark that the value $N/2$ is optimal because
Lindenstrauss and Tsukamoto \cite[Theorem 1.3]{LT} constructed a minimal system of mean dimension
$N/2$ which never embeds into the shift on $([0,1]^N)^\mathbb{Z}$.

\medskip

We now present the main result of our paper.
Before we state our main result,
we recall some necessary notions and results.
We will use the theory of harmonic analysis.
First recall that a {\it rapidly decreasing function} $f$ is an infinitely differentiable function
on $\mathbb{R}$ satisfying $$\lim_{|x|\rightarrow\infty}x^nf^{(j)}(x)=0$$
for all $n,j\in\mathbb{N}$.
Denote by $S(\mathbb{R})$ the space of all rapidly decreasing functions
equipped with the topology given by the family of seminorms
$$||f||_{j,n}=\sup|x^nf^{(j)}(x)|.$$
This topology on $S(\mathbb{R})$ is metrizable and $S(\mathbb{R})$
is complete; namely, $S(\mathbb{R})$ becomes a Fr\'{e}chet space.
We call a continuous linear functional on $S(\mathbb{R})$
a {\it tempered distribution} on $\mathbb{R}$
(for details see \cite[Chapter VI.4]{Ka}).

For $L^{1}$-functions (in particular, rapidly decreasing functions)
$f:\mathbb{R}\rightarrow\mathbb{C}$, the definition of the Fourier
transforms is given by
\[
\mathcal{F}(f)(\xi)=\int_{-\infty}^{\infty}e^{-2\pi\sqrt{-1}t\xi}f(t)dt,\,\,\,\overline{\mathcal{F}}(f)(t)=\int_{-\infty}^{\infty}e^{2\pi\sqrt{-1}t\xi}f(\xi)d\xi.
\]
We have $\overline{\mathcal{F}}(\mathcal{F}(f))=f$
when $\mathcal{F}(f)\in L^1(\mathbb{R})$
and $\mathcal{F}(\overline{\mathcal{F}}(f))=f$
when $\overline{\mathcal{F}}(f)\in L^1(\mathbb{R})$.
For functions $f,g\in L^2(\mathbb{R})\cap L^1(\mathbb{R})$
we have by Plancherel's theorem \cite[Chapter VI.3.1, pp. 156]{Ka}
$\int f\overline{g}=\langle f,g\rangle=\langle\mathcal{F}(f),\mathcal{F}(g)\rangle$.
Assuming in addition
$\overline{\mathcal{F}}(f),\overline{\mathcal{F}}(g)\in L^1(\mathbb{R})$
and setting $f'=\overline{\mathcal{F}}(f)$ and $g'=\overline{\mathcal{F}}(g)$,
we have $\langle f,g\rangle=\langle\overline{\mathcal{F}}(f),\overline{\mathcal{F}}(g)\rangle$.
In particular letting $f=\mathcal{F}(\psi)$ and $g=\varphi$, we have
$\langle\mathcal{F}(\psi),\varphi\rangle=\langle\psi,\overline{\mathcal{F}}(\varphi)\rangle$.

We extend $\mathcal{F}$ and $\overline{\mathcal{F}}$ to tempered
distributions (in particular, to bounded continuous functions) $\psi$
by the dualities $\langle\mathcal{F}(\psi),\varphi\rangle=\langle\psi,\overline{\mathcal{F}}(\varphi)\rangle$
and $\langle\overline{\mathcal{F}}(\psi),\varphi\rangle=\langle\psi,\mathcal{F}(\varphi)\rangle$
where $\varphi$ are rapidly decreasing functions. For example, $\mathcal{F}(1)=\boldsymbol{\delta}_{0}$
is the delta probability measure at the origin. Recall that $\mathrm{supp}\mathcal{F}(f)\subset[a,b]$
means that the pairing $\langle\mathcal{F}(f),g\rangle$ vanishes
for any rapidly decreasing function $g:\mathbb{R}\to\mathbb{C}$ with
$\mathrm{supp}(g)\cap[a,b]=\emptyset$.

Let $a<b$ be two real numbers. We define $B_{1}^{\mathbb{C}}(V[a,b])$
as the space of all bounded continuous functions $f:\mathbb{R}\rightarrow\mathbb{C}$
satisfying $\supp\mathcal{F}(f)\subset[a,b]$ and $||f||_{L^{\infty}(\mathbb{R})}\le1$.
An important nontrivial fact is that $B_{1}^{\mathbb{C}}(V[a,b])$
is a compact metric space with respect to the distance $\boldsymbol{d}$
defined by
\[
\boldsymbol{d}(f_{1},f_{2})=\sum_{n=1}^{\infty}\frac{||f_{1}-f_{2}||_{L^{\infty}([-n,n])}}{2^{n}}
\]
where $f_{1},f_{2}\in B_{1}^{\mathbb{C}}(V[a,b])$, which coincides
with the standard topology of tempered distributions (see \cite[Lemma 2.3]{GT},
\cite[Chapter 7, Section 4]{Schwartz}).
One can also show that
$$B_{1}^{\mathbb{C}}(V[a,b])=\overline{\{f:\mathbb{R}\to\mathbb{C}|\,
f\textrm{ is rapidly decreasing,}\,
||f||_{L^{\infty}(\mathbb{R})}\le1,\;
\supp\mathcal{F}(f)\subset[a,b]\}},$$
where the closure is taken with respect to
the distance ${\boldsymbol{d}}$.

\medskip

We now define the flow $(B_1^\mathbb{C}(V[a,b]),\mathbb{R})=(B_1^\mathbb{C}(V[a,b]),(\tau_r)_{r\in\mathbb{R}})$ as follows:
for every $r\in\mathbb{R}$ and every $f\in B_1^\mathbb{C}(V[a,b])$,
we define $(\tau_rf)(t)=f(t+r)$ for each $t\in\mathbb{R}$.
By noting that $\supp\mathcal{F}(\tau_rf)=\supp\mathcal{F}(f)$
(see \cite{Ka})
we can easily check that
$(B_1^\mathbb{C}(V[a,b]),\mathbb{R})$ is indeed a flow.
Since $B_1^\mathbb{C}(V[a,b])$ is a compact metric space,
we obtain a compact metric flow
$(B_1^\mathbb{C}(V[a,b]),\mathbb{R})$,
the $\mathbb{R}$-shift on $B_1^\mathbb{C}(V[a,b])$.

Let $a>0$ and define $$B_1V_{-a}^a=\{f:\mathbb{R}\rightarrow[-1,1]:f\in B_1^\mathbb{C}(V[-a,a])\}.$$
Just as above,
we get a compact metric flow $(B_1V_{-a}^a,\mathbb{R})$,
the $\mathbb{R}$-shift on $B_1V_{-a}^a$.
We consider these flows as
compact metric subflows of the Bebutov flow $(C(\mathbb{R},[0,1]),\mathbb{R})$, through the canonical embedding $f\mapsto \frac{f+1}{2}$, and we use them to construct the explicit universal flow in our main theorem:

\medskip

\begin{Theorem}[Main theorem]\label{mth}
Let $M=\prod_{n\in\mathbb{N}}B_1V_{-n}^n$.
Then the compact metric flow $(M,\mathbb{R})$ is a universal flow for all compact metric flows.
\end{Theorem}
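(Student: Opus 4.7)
My plan is to construct, for an arbitrary compact metric flow $(X,\mathbb{R})=(X,(\varphi_r)_{r\in\mathbb{R}})$, a continuous $\mathbb{R}$-equivariant injection $\Phi:X\to M$; since $X$ is compact and $M$ Hausdorff, such a $\Phi$ is automatically a homeomorphism onto its image, hence the desired embedding. The guiding idea is to produce each component $\Phi_n:X\to B_1V_{-n}^n$ by convolving orbit functions $t\mapsto g(\varphi_t x)$ with band-limited approximate identities $F_n$ whose Fourier transforms are supported in $[-n,n]$.

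For the construction, I first fix a countable dense family $\{g_k\}_{k\in\mathbb{N}}\subset C(X,[-1,1])$, which exists because $X$ is compact metric and thus $C(X,[-1,1])$ is separable; in particular $\{g_k\}$ separates points. For each $n\in\mathbb{N}$, I take a real-valued kernel $F_n\in L^1(\mathbb{R})$ (for instance the Fej\'er-type kernel with $\mathcal{F}(F_n)(\xi)=\max(1-|\xi|/n,0)$, so $F_n(t)=n\bigl(\tfrac{\sin(\pi nt)}{\pi nt}\bigr)^{2}$) satisfying $F_n\ge 0$, $\|F_n\|_{L^1}=1$, $\supp\mathcal{F}(F_n)\subset[-n,n]$, and $\int_{|s|>\delta}F_n(s)\,ds\to 0$ as $n\to\infty$ for every $\delta>0$. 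Finally I fix a map $\pi:\mathbb{N}\to\mathbb{N}$ with $\pi^{-1}(k)$ infinite for every $k$, and define
\[
\Phi_n(x)(t)=\int_{\mathbb{R}}g_{\pi(n)}(\varphi_{t-s}x)\,F_n(s)\,ds,\qquad x\in X,\ t\in\mathbb{R}.
\]
Then $|\Phi_n(x)(t)|\le 1$ since $\int F_n=1$ and $|g_{\pi(n)}|\le 1$; viewing $\Phi_n(x)$ as the convolution of the bounded continuous orbit function $t\mapsto g_{\pi(n)}(\varphi_tx)$ with $F_n$ and using $\supp\mathcal{F}(F_n)\subset[-n,n]$ gives $\supp\mathcal{F}(\Phi_n(x))\subset[-n,n]$, so $\Phi_n(x)\in B_1V_{-n}^n$. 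A change of variable yields $\Phi_n(\varphi_rx)(t)=\Phi_n(x)(t+r)$, while continuity of $\Phi_n$ follows from joint continuity of $(x,t)\mapsto g_{\pi(n)}(\varphi_tx)$ combined with continuity of convolution by $F_n\in L^1$ in the sup-on-compact-sets topology.

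The crux is injectivity of $\Phi=(\Phi_n)_{n\in\mathbb{N}}:X\to M$. Given $x\neq y$, density of $\{g_k\}$ gives some $k_0$ with $g_{k_0}(x)\neq g_{k_0}(y)$; set $\eta=|g_{k_0}(x)-g_{k_0}(y)|/3$. Joint continuity of $(s,z)\mapsto g_{k_0}(\varphi_sz)$ on the compact set $[-1,1]\times X$ yields $\delta>0$ with $|g_{k_0}(\varphi_{-s}z)-g_{k_0}(z)|<\eta/2$ whenever $|s|\le\delta$ and $z\in X$. Choosing $n$ large enough that $\int_{|s|>\delta}F_n<\eta/4$ and $\pi(n)=k_0$ (possible since $\pi^{-1}(k_0)$ is infinite), splitting $\Phi_n(z)(0)-g_{k_0}(z)=\int(g_{k_0}(\varphi_{-s}z)-g_{k_0}(z))F_n(s)\,ds$ into $|s|\le\delta$ and $|s|>\delta$ yields $|\Phi_n(z)(0)-g_{k_0}(z)|<\eta$ for every $z\in X$, whence $|\Phi_n(x)(0)-\Phi_n(y)(0)|\ge 3\eta-2\eta=\eta>0$, so $\Phi_n(x)\neq\Phi_n(y)$. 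The main technical obstacle I anticipate is rigorously justifying, at the level of tempered distributions, that the convolution of a merely bounded continuous orbit function with $F_n$ actually lies in $B_1V_{-n}^n$ as that space is \emph{defined} in the paper (as the closure of \emph{rapidly decreasing} functions with the stated support and sup-norm bounds); this should reduce to pairing against Schwartz test functions whose support is disjoint from $[-n,n]$, using the duality identities $\langle\mathcal{F}(\psi),\varphi\rangle=\langle\psi,\overline{\mathcal{F}}(\varphi)\rangle$ recalled in the introduction, but deserves careful verification.
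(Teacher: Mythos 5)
Your proof is correct, and its analytic heart --- convolving orbit functions with the Fej\'er-type kernel whose Fourier transform is the tent function supported in $[-n,n]$ --- is precisely the kernel the paper uses; but you organize the argument genuinely differently. The paper proceeds in two steps: Lemma~2.1 embeds an arbitrary compact metric flow into $(C(\mathbb{R},[0,1]^\mathbb{N}),\mathbb{R})$ via a topological embedding $\phi=(\phi_i)_{i\in\mathbb{N}}:X\to[0,1]^\mathbb{N}$ and the orbit map $x\mapsto\phi(\varphi_{\cdot}x)$, and then Theorem~3.1 embeds any compact subflow of that (noncompact) function space into $\prod_{n}(B_1V_{-n}^n)^n$ by $(f_i)_{i}\mapsto((f_i\ast\varphi_n)_{1\le i\le n})_{n}$, pairing the $n$-th kernel with the first $n$ coordinates; the main theorem then follows after a reindexing remark identifying $\prod_n(B_1V_{-n}^n)^n$ with a subflow of $\prod_n B_1V_{-n}^n$. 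You collapse this into a single step: your dense family $\{g_k\}\subset C(X,[-1,1])$ plays the role of the coordinate functions $\phi_i$, and your surjection $\pi$ with infinite fibers replaces the paper's triangular pairing, so that you land in $\prod_n B_1V_{-n}^n$ exactly as stated, one band-limited function per factor. What the paper's factorization buys is a reusable intermediate statement (universality of the countable product of Bebutov flows) and a Section~3 argument that is purely about function spaces; what yours buys is brevity and a literal match with the statement of the theorem, at the cost of redoing on $X$ itself the uniform-continuity estimates that the paper carries out once and for all inside $C(\mathbb{R},[0,1]^\mathbb{N})$. Indeed the two constructions essentially coincide upon composition: the paper's composite map is $x\mapsto((\phi_i(\varphi_{\cdot}x)\ast\varphi_n)_{1\le i\le n})_n$, which is your map under a different bookkeeping of indices.

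As for the technical point you flag at the end: it is genuine, but it resolves exactly as you suggest, and more easily than you fear, because the paper's \emph{primary} definition of $B_1V_{-n}^n$ is the distributional support condition itself; the description as a closure of rapidly decreasing functions is stated as a secondary characterization, not the definition. Concretely, for a rapidly decreasing $g$ with $\supp(g)\cap[-n,n]=\emptyset$, Fubini (legitimate since $h$ is bounded, $F_n\in L^1$ and $\overline{\mathcal{F}}(g)$ is Schwartz) together with evenness of $F_n$ gives $\langle\mathcal{F}(h\ast F_n),g\rangle=\langle h\ast F_n,\overline{\mathcal{F}}(g)\rangle=\int_{\mathbb{R}} h(u)\,\bigl(F_n\ast\overline{\mathcal{F}}(g)\bigr)(u)\,du$, and $F_n\ast\overline{\mathcal{F}}(g)$ is a continuous $L^1$ function whose Fourier transform $\mathcal{F}(F_n)\cdot g$ vanishes identically, hence it is itself identically zero. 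The paper glosses this same point by citing the identity $\mathcal{F}(f\ast\varphi_n)=\mathcal{F}(f)\cdot\mathcal{F}(\varphi_n)$ from Katznelson, so your treatment is, if anything, the more careful of the two.
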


Note that the compact metric universal space we provide
is a countable product of compact subspaces of
the Bebutov flow $(C(\mathbb{R},[0,1]),\mathbb{R})$,
and therefore explicitly embeds into $(C(\mathbb{R},[0,1]^\mathbb{N}),\mathbb{R})$.

\medskip

We remark here that there are some previously known results related to universal flows.
We will give a universal flow in Section 4 via
a $C^\ast$-algebraic approach due to de Vries.
Other constructions are possible.
However, this approach builds a universal space which does not explicitly
embed into $(C(\mathbb{R},[0,1]^\mathbb{N}),\mathbb{R})$.

\medskip

This paper is organized as follows.
In Section 2, we show that
any compact metric flow embeds into $(C(\mathbb{R},[0,1]^\mathbb{N}),\mathbb{R})$.
In Section 3, we show that
any compact subflow of $(C(\mathbb{R},[0,1]^\mathbb{N}),\mathbb{R})$ embeds into
$(\prod_{n\in\mathbb{N}}B_1V_{-n}^n,\mathbb{R})$, which essentially completes the proof of Theorem \ref{mth}.
Finally, Section 4 compares our explicit construction in the main result
with a previously known result
related to universal flows.
The paper ends with an open question.

\medskip

\textbf{Acknowledgement.}
Y.G. was partially supported by the Marie Curie grant PCIG12-GA-2012-334564 and by the National Science Center (Poland) grant 2016/22/E/ST1/00448. The authors were partially supported
by the National Science Center (Poland) grant 2013/08/A/ST1/00275.

\medskip

\section{A natural noncompact universal flow}

In this section,
we embed all compact metric flows into the countable product of
the Bebutov flow, namely, the flow $(C(\mathbb{R},[0,1]^\mathbb{N}),\mathbb{R})$.
Note that $C(\mathbb{R},[0,1]^\mathbb{N})$ exactly consists of all those functions
$f=(f_i)_{i\in\mathbb{N}}$ with $f_i\in C(\mathbb{R},[0,1])$ for every $i\in\mathbb{N}$.

\begin{Lemma}\label{lem1}
The flow $(C(\mathbb{R},[0,1]^\mathbb{N}),\mathbb{R})$ is a universal flow for all compact metric flows
$(X,\mathbb{R})$.
\end{Lemma}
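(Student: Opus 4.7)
The plan is to construct, for a given compact metric flow $(X,\mathbb{R})=(X,(\varphi_r)_{r\in\mathbb{R}})$, an explicit equivariant embedding into $C(\mathbb{R},[0,1]^\mathbb{N})$ via the orbit map of a topological embedding of $X$ into the Hilbert cube.

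First, I would recall the standard fact that every compact metric space embeds topologically in $[0,1]^\mathbb{N}$. Explicitly, one picks a countable dense sequence $\{x_i\}_{i\in\mathbb{N}}\subset X$ and defines $g_i(x)=\min(1,d(x,x_i))$ (with $d$ a metric on $X$); the map $\iota=(g_i)_{i\in\mathbb{N}}\colon X\to[0,1]^\mathbb{N}$ is a continuous injection, hence a topological embedding since $X$ is compact.

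Next, I would define $F\colon X\to C(\mathbb{R},[0,1]^\mathbb{N})$ by
\[
F(x)(r)=\iota(\varphi_r x)=(g_i(\varphi_r x))_{i\in\mathbb{N}},\qquad r\in\mathbb{R}.
\]
I would then verify in turn:
\begin{itemize}
\item[(i)] $F(x)\in C(\mathbb{R},[0,1]^\mathbb{N})$: joint continuity of $(r,x)\mapsto\varphi_r x$ and continuity of each $g_i$ give continuity of $r\mapsto F(x)(r)$.
\item[(ii)] Equivariance: $F(\varphi_s x)(r)=\iota(\varphi_{r+s}x)=F(x)(r+s)=(\varphi_s F(x))(r)$.
\item[(iii)] Injectivity: $F(x)=F(y)$ evaluated at $r=0$ gives $\iota(x)=\iota(y)$, hence $x=y$.
\item[(iv)] Continuity of $F$: if $y\to x$ in $X$, then by compactness of $X$ combined with joint continuity of the action, $g_i(\varphi_r y)\to g_i(\varphi_r x)$ uniformly on each compact interval $[-N,N]$. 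Truncating the double series defining the metric on $C(\mathbb{R},[0,1]^\mathbb{N})$ at finitely many $n,N$ and bounding the tail geometrically yields $d(F(y),F(x))\to 0$.
\item[(v)] Homeomorphism onto image: $X$ is compact and $F$ is continuous and injective, so $F$ is a homeomorphism onto $F(X)$ with the subspace topology.
\end{itemize}

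The only point that might need care, and which I expect to be the main (mild) obstacle, is (iv): I need the uniform convergence $g_i(\varphi_r y_k)\to g_i(\varphi_r x)$ on compact sets of $r$ when $y_k\to x$. This is precisely where compactness of $X$ enters crucially, via the standard fact that a jointly continuous action of $\mathbb{R}$ on a compact metric space is uniformly continuous on compact time intervals, so the map $x\mapsto\bigl(r\mapsto\varphi_r x\bigr)$ from $X$ to $C(\mathbb{R},X)$ (with compact-open topology) is continuous. Composing with the continuous $\iota$ then yields exactly the continuity required.
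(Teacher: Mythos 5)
Your proposal is correct and follows essentially the same route as the paper: embed $X$ topologically into the Hilbert cube, send $x$ to the orbit function $r\mapsto\iota(\varphi_r x)$, and verify equivariance, injectivity, and continuity (the latter via uniform continuity of the action on compact time intervals, exactly as in the paper), concluding with the compactness argument for a homeomorphism onto the image. The only cosmetic difference is that you construct the Hilbert cube embedding explicitly via $g_i(x)=\min(1,d(x,x_i))$, whereas the paper simply cites the standard embedding theorem.
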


\begin{proof}
Write $(C(\mathbb{R},[0,1]^\mathbb{N}),\mathbb{R})=(C(\mathbb{R},[0,1]^\mathbb{N}),(\psi_r)_{r\in\mathbb{R}})$,
and let $(X,\mathbb{R})=(X,(\varphi_r)_{r\in\mathbb{R}})$ be a compact metric flow.
Since $X$ is a compact metric space, there exists a homeomorphism
$\phi:X\rightarrow\phi(X)\subset[0,1]^\mathbb{N}$
(for details see \cite[Theorem 34.1]{M}).
Define $F:X\rightarrow C(\mathbb{R},[0,1]^\mathbb{N})$ as follows:
for every $x\in X$, $F(x)(t)=\phi(\varphi_t(x))$ for all $t\in\mathbb{R}$.
Since $\phi:X\rightarrow [0,1]^\mathbb{N}$ is continuous and $\varphi_t(x)$ is continuous with respect to $t\in\mathbb{R}$,
we know that for every $x\in X$, $F(x)\in C(\mathbb{R},[0,1]^\mathbb{N})$.

For any $x\in X$, let us write $\phi(x)=(\phi_n(x))_{n\in\mathbb{N}}\in[0,1]^\mathbb{N}$.
Denote by $d_X$ the metric on $X$ and by $d$ the metric on $(C(\mathbb{R},[0,1]^\mathbb{N}),\mathbb{R})$.
Since $X$ is compact and $\phi$ is continuous, $\phi$ is uniformly continuous on $X$.
Fix $x\in X$, $m\in\mathbb{N}$ and $N\in\mathbb{N}$.
Then for every $\epsilon>0$ there is $\xi>0$ such that whenever $z_1,z_2\in X$ with $d_X(z_1,z_2)<\xi$
we have $|\phi_n(z_1)-\phi_n(z_2)|<\epsilon$ for each $n\in\mathbb{N}$ with $n\le m$.
Since $X$ is compact,
for such $\xi>0$ there exists $\delta>0$ such that
for every $y\in X$ satisfying $d_X(y,x)<\delta$ we have $d_X(\varphi_t(y),\varphi_t(x))<\xi$ for all $t\in[-N,N]$,
which implies that
$$||F(y)_n-F(x)_n||_{L^\infty([-N,N])}=\sup_{t\in[-N,N]}|\phi_n(\varphi_t(y))-\phi_n(\varphi_t(x))|\le\epsilon$$
for all $n\in\mathbb{N}$ with $n\le m$.
Thus, for every $x\in X$ and every $\epsilon>0$ we can find $\delta>0$ such that for all
those $y\in X$ with $d_X(y,x)<\delta$
we have $d(F(y),F(x))<\epsilon$. This shows that
$F:X\rightarrow C(\mathbb{R},[0,1]^\mathbb{N})$ is continuous.

It is clear that $F:X\rightarrow F(X)$ is one-to-one
since $\varphi_t$ and $\phi$ are homeomorphisms.
Since $X$ is compact and
$F:X\rightarrow F(X)$ is continuous and one-to-one,
we get that the map $F:X\rightarrow F(X)$ is a homeomorphism.

For every $r\in\mathbb{R}$ and $x\in X$, we have
$$F(\varphi_r(x))(t)=\phi(\varphi_t(\varphi_r(x)))=\phi(\varphi_{t+r}(x))
=F(x)(t+r)=\psi_r(F(x))(t)$$
for any $t\in\mathbb{R}$,
which shows that $F$ is $\mathbb{R}$-equivariant:
$F\circ\varphi_r=\psi_r\circ F$ for all $r\in\mathbb{R}$.
Thus, $(X,\mathbb{R})$ embeds into $(C(\mathbb{R},[0,1]^\mathbb{N}),\mathbb{R})$.
\end{proof}

\medskip

Next we consider compact subflows of the flow
$(C(\mathbb{R},[0,1]^\mathbb{N}),\mathbb{R})$.
Let $(X,\mathbb{R})$ and $(Y,\mathbb{R})$ be flows.
We say that $(Y,\mathbb{R})$ is a {\it subflow} (or sometimes, a {\it subspace}) of $(X,\mathbb{R})$
if $Y$ is an $\mathbb{R}$-invariant subset of $X$ endowed with the subspace topology
and $\mathbb{R}$ acts as the restriction on $Y$.
In order to prove Theorem \ref{mth} it suffices to show that
$(M,\mathbb{R})$ is universal for all compact subflows of $(C(\mathbb{R},[0,1]^\mathbb{N}),\mathbb{R})$.

\medskip

\begin{Theorem}\label{thm1}
Let $M=\prod_{n\in\mathbb{N}}(B_1V_{-n}^n)^n$.
Then the compact metric flow $(M,\mathbb{R})$ is a universal flow
for all compact subflows of $(C(\mathbb{R},[0,1]^\mathbb{N}),\mathbb{R})$.
\end{Theorem}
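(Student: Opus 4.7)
The plan is to construct an explicit $\mathbb{R}$-equivariant embedding $\Phi\colon X \to M$ by convolving each coordinate of $f \in X \subset C(\mathbb{R},[0,1]^\mathbb{N})$ against a carefully chosen family of bandlimited kernels. Concretely, I fix nonnegative kernels $K_n \in L^1(\mathbb{R})$ (for instance appropriately scaled Fej\'er kernels, i.e.\ squares of sinc functions) satisfying $\int_\mathbb{R} K_n = 1$, $\mathrm{supp}\,\mathcal{F}(K_n) \subset [-n,n]$, and such that $K_n * g \to g$ uniformly on compact sets as $n \to \infty$ for every bounded continuous $g\colon \mathbb{R} \to \mathbb{R}$.

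Writing each $f \in X$ as a sequence $(f_i)_{i\in\mathbb{N}}$ of functions $f_i \in C(\mathbb{R},[0,1])$, the embedding is defined by
\[
\Phi(f) = \bigl(\,K_n * (2f_1-1),\; K_n * (2f_2-1),\; \ldots,\; K_n * (2f_n-1)\,\bigr)_{n\in\mathbb{N}}.
\]
Four properties have to be verified. (a) Each coordinate lies in $B_1V_{-n}^n$: it is real-valued, the bound $\|K_n * h\|_\infty \le \|K_n\|_{L^1}\|h\|_\infty \le 1$ is elementary, and the Fourier-support inclusion follows from the distributional identity $\mathcal{F}(K_n * h) = \mathcal{F}(K_n)\,\mathcal{F}(h)$. (b) Equivariance: convolution commutes with translation, so $\Phi$ intertwines the shift on $X$ with the product shift on $M$. (c) Continuity: if $f^{(k)} \to f$ in $C(\mathbb{R},[0,1]^\mathbb{N})$, then $K_n * f_i^{(k)} \to K_n * f_i$ uniformly on compact sets, by splitting the convolution integral at a large radius $R$ and using the uniform integrability of $K_n$ together with the uniform boundedness of the coordinates. (d) Injectivity: $\Phi(f) = \Phi(g)$ forces $K_n * (f_i - g_i) = 0$ for every $n \ge i$, and the approximation property $K_n * h \to h$ then gives $f_i = g_i$ for every $i$.

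Together with compactness of $X$, properties (a)--(d) upgrade $\Phi$ to a homeomorphism onto its image and hence to an embedding of flows. The step I expect to be most delicate is the Fourier-support assertion in (a): because $2f_i - 1$ is merely bounded, not integrable or square-integrable, the classical convolution theorem does not apply directly, and one must argue within the distributional framework laid out in the introduction, verifying that convolution of a tempered distribution with a Schwartz kernel whose Fourier transform is supported in $[-n,n]$ cannot enlarge the Fourier support. Everything else reduces to standard approximate-identity arguments once the family $\{K_n\}$ has been chosen.
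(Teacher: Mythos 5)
Your proposal is correct and follows essentially the same route as the paper: the authors also convolve each coordinate with scaled Fej\'er kernels (they construct $\varphi_n=\overline{\mathcal{F}}(\psi_n)$ from a tent function, computing $\varphi(x)=\sin^2(\pi x)/(\pi^2 x^2)$), place the first $n$ convolved coordinates in the $n$-th block of $\prod_{n}(B_1V_{-n}^n)^n$, and verify exactly your points (a)--(d) plus the compactness upgrade, including the distributional identity $\mathcal{F}(f_i\ast\varphi_n)=\mathcal{F}(f_i)\cdot\mathcal{F}(\varphi_n)$ for the Fourier-support step. The only (immaterial) difference is your affine rescaling $f_i\mapsto 2f_i-1$; the paper convolves $f_i$ directly, since $[0,1]$-valued functions already lie in $[-1,1]$.
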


Clearly, $\prod_{n\in\mathbb{N}}(B_1V_{-n}^n)^n$ is a subspace of $\prod_{n\in\mathbb{N}}B_1V_{-n}^n$.
For simplicity we write $$M=\prod_{n\in\mathbb{N}}B_1V_{-n}^n$$ in the statement of Theorem \ref{mth}.

\medskip

\begin{proof}[Proof of Theorem \ref{mth} assuming Theorem \ref{thm1}]
Let $(X,\mathbb{R})$ be a compact metric flow.
According to Lemma \ref{lem1}, the flow $(X,\mathbb{R})$ embeds into $(C(\mathbb{R},[0,1]^\mathbb{N}),\mathbb{R})$ via an embedding $f$.
Since $X$ is compact, $(f(X),\mathbb{R})$ is a compact subflow of $(C(\mathbb{R},[0,1]^\mathbb{N}),\mathbb{R})$.
Thus, by Theorem \ref{thm1},
$(f(X),\mathbb{R})$ embeds into $(M,\mathbb{R})$,
which means that
$(X,\mathbb{R})$ embeds into $(M,\mathbb{R})$.
This ends the proof.
\end{proof}

\medskip

\section{Proof of Theorem \ref{thm1}}

\begin{proof}[Proof of Theorem \ref{thm1}]

Take a continuous function $\psi:\mathbb{R}\rightarrow[0,1]$ as follows (a tent function):
$\psi(x)=0$ for $x\in(-\infty,-1]\cup[1,+\infty)$; $\psi(0)=1$;
and $\psi$ is linear on both $[-1,0]$ and $[0,1]$.
For every $n\in\mathbb{N}$, set $\psi_n(x)=\psi(x/n)$ for all $x\in\mathbb{R}$,
and let $\varphi_n=\overline{\mathcal{F}}(\psi_n)$.
Then we have the following:

\medskip

\noindent {\bf Claim.}
$(\varphi_n)_{n\in\mathbb{N}}$ is a sequence of functions over $\mathbb{R}$ satisfying:
for every $n\in\mathbb{N}$, $\varphi_n:\mathbb{R}\rightarrow\mathbb{R}$ is continuous,
$\varphi_n\ge0$, $\int_\mathbb{R}\varphi_n(t)dt=1$,
$\supp\mathcal{F}(\varphi_n)\subset[-n,n]$; and
for every $h\in C(\mathbb{R},[0,1])$ and every $N\in\mathbb{N}$,
it holds that
$||h\ast\varphi_n-h||_{L^\infty([-N,N])}\rightarrow0$
as $n\rightarrow\infty$,
where $h\ast\varphi_n$ denotes the convolution of $h$ and $\varphi_n$.
\footnote{Note that $(\varphi_n)_{n\in\mathbb{N}}$ is called a
positive summability kernel (see \cite[Chapters I.2, VII.2]{Ka}).}

\medskip

In fact,
put $\varphi=\overline{\mathcal{F}}(\psi)$,
then for each $x\in\mathbb{R}$ we have
\begin{align*}
\varphi(x)&=\overline{\mathcal{F}}(\psi)(x)
=\int_\mathbb{R}e^{2\pi\sqrt{-1}xt}\psi(t)dt\\
&=2\int_0^{+\infty}\psi(t)\cos(2\pi xt)dt
=2\int_0^1(1-t)\cos(2\pi xt)dt\\
&=\frac{2\sin(2\pi x)}{2\pi x}-\frac{2}{(2\pi x)^2}(2\pi x\sin(2\pi x)+\cos(2\pi x)-1)
=\frac{1-\cos(2\pi x)}{2\pi^2x^2}\\
&=\frac{\sin^2(\pi x)}{\pi^2x^2}.
\end{align*}
Thus, $\varphi:\mathbb{R}\rightarrow\mathbb{R}$ is continuous, $\varphi\ge0$,
and it holds that
\begin{align*}
\int_\mathbb{R}\varphi(x)dx&=\int_\mathbb{R}\frac{\sin^2(\pi x)}{\pi^2x^2}dx
=\frac{1}{\pi}\int_\mathbb{R}\frac{\sin^2t}{t^2}dt\\
&=-\frac{1}{\pi}\int_\mathbb{R}\sin^2td(\frac{1}{t})
=\frac{1}{\pi}\int_\mathbb{R}\frac{\sin(2t)}{t}dt\\
&=\frac{1}{\pi}\int_\mathbb{R}\frac{\sin t}{t}dt
=1.
\end{align*}
Hence
for every $n\in\mathbb{N}$,
by noting that it holds for all $x\in\mathbb{R}$
that
$$\varphi_n(x)=\overline{\mathcal{F}}(\psi_n)(x)
=n\overline{\mathcal{F}}(\psi)(nx)=n\varphi(nx),$$
we have that
$\varphi_n:\mathbb{R}\rightarrow\mathbb{R}$ is continuous, $\varphi_n\ge0$,
and $$\int_\mathbb{R}\varphi_n(x)dx
=\int_\mathbb{R}|\varphi_n(t)|dt
=\int_\mathbb{R}n\varphi(nx)dx=\int_\mathbb{R}\varphi(t)dt=1.$$
Take $h\in C(\mathbb{R},[0,1])$ and $N\in\mathbb{N}$.
Since for every $n\in\mathbb{N}$ we have $\int_\mathbb{R}\varphi_n(t)dt=1$,
it holds that
$$|h\ast\varphi_n(x)-h(x)|=\left|\int_\mathbb{R}(h(x-t)-h(x))\varphi_n(t)dt\right|$$
for all $x\in\mathbb{R}$.
Fix $\epsilon>0$.
Then we can choose $\delta\in(0,1)$ such that
$|h(y_1)-h(y_2)|<\epsilon/2$ whenever $y_1,y_2\in[-N-1,N+1]$ with $|y_1-y_2|<\delta$.
Since
$\int_\mathbb{R}|\varphi(y)|dy<\infty$,
there exists
$A>0$ satisfying $\int_{\mathbb{R}\setminus[-A,A]}|\varphi(y)|dy<\epsilon/4$.
Take $m\in\mathbb{N}$ with $m\delta>A$.
Then for any $n\in\mathbb{N}$ with $n>m$,
we have $n\delta>m\delta>A$ and hence
$$\int_{\mathbb{R}\setminus[-\delta,\delta]}|\varphi_n(t)|dt
=\int_{\mathbb{R}\setminus[-\delta,\delta]}n|\varphi(nt)|dt
=\int_{\mathbb{R}\setminus[-n\delta,n\delta]}|\varphi(y)|dy
\le\int_{\mathbb{R}\setminus[-A,A]}|\varphi(y)|dy
<\frac{\epsilon}{4}.$$
It follows that
\begin{align*}
|h\ast\varphi_n(x)-h(x)|
&\le\int_\mathbb{R}|h(x-t)-h(x)|\cdot|\varphi_n(t)|dt\\
&=\int_{\mathbb{R}\setminus[-\delta,\delta]}|h(x-t)-h(x)|\cdot|\varphi_n(t)|dt+\int_{-\delta}^\delta|h(x-t)-h(x)|\cdot|\varphi_n(t)|dt\\
&\le\int_{\mathbb{R}\setminus[-\delta,\delta]}2||h||_{L^\infty(\mathbb{R})}\cdot|\varphi_n(t)|dt
+\int_{-\delta}^\delta\frac{\epsilon}{2}\cdot|\varphi_n(t)|dt\\
&\le2\int_{\mathbb{R}\setminus[-\delta,\delta]}|\varphi_n(t)|dt
+\frac{\epsilon}{2}\cdot\int_\mathbb{R}|\varphi_n(t)|dt\\
&<2\cdot\frac{\epsilon}{4}+\frac{\epsilon}{2}=\epsilon
\end{align*}
for all $x\in[-N,N]$.
Thus, $$||h\ast\varphi_n-h||_{L^\infty([-N,N])}\rightarrow0$$ as $n\rightarrow\infty$.
Finally,
for every $n\in\mathbb{N}$ we have
$$\supp\mathcal{F}(\varphi_n)=\supp\mathcal{F}(\overline{\mathcal{F}}(\psi_n))=\supp\psi_n\subset[-n,n].$$
This proves the claim.

\medskip

Let $(X,\mathbb{R})$ be a compact subflow of $(C(\mathbb{R},[0,1]^\mathbb{N}),\mathbb{R})$.
We are going to embed $(X,\mathbb{R})$ into $(M,\mathbb{R})$.
For each $n\in\mathbb{N}$, we define $F_n:C(\mathbb{R},[0,1]^\mathbb{N})\rightarrow(B_1V_{-n}^n)^n$ as follows:
$$F_n(f)=F_n((f_i)_{i\in\mathbb{N}})=(f_i\ast\varphi_n)_{i\in\{1,\cdots,n\}},$$
where $f=(f_i)_{i\in\mathbb{N}}\in C(\mathbb{R},[0,1]^\mathbb{N})$.

We should check that $f_i\ast\varphi_n\in B_1V_{-n}^n$ for any $n\in\mathbb{N}$ and $f_i\in C(\mathbb{R},[0,1])$.
In fact, since $||f_i||_{L^\infty(\mathbb{R})}\le1$ and $\int_\mathbb{R}|\varphi_n(y)|dy=1$, we have
$$|f_i\ast\varphi_n(x)|=\left|\int_\mathbb{R}f_i(x-y)\varphi_n(y)dy\right|\le\int_\mathbb{R}|\varphi_n(y)|dy=1$$
for all $x\in\mathbb{R}$, which shows that $||f_i\ast\varphi_n||_{L^\infty(\mathbb{R})}\le1$.
Fix $x_0\in\mathbb{R}$ and $\epsilon>0$. Since $\int_\mathbb{R}|\varphi_n(y)|dy<\infty$,
we can choose $A>0$ sufficiently large such that $\int_{\mathbb{R}\setminus[-A,A]}|\varphi_n(y)|dy<\epsilon/4$.
Since $f_i$ is uniformly continuous on $[x_0-A-1,x_0+A+1]$,
there exists $\delta\in(0,1)$ such that $|f_i(x_1)-f_i(x_2)|<\epsilon/2$ for any $x_1,x_2\in[x_0-A-1,x_0+A+1]$ with $|x_1-x_2|<\delta$.
Hence for any $x\in(x_0-\delta,x_0+\delta)$, we have
\begin{align*}
&|f_i\ast\varphi_n(x)-f_i\ast\varphi_n(x_0)|\\
\le&\int_\mathbb{R}|f_i(x-y)-f_i(x_0-y)|\cdot|\varphi_n(y)|dy\\
\le&\int_{\mathbb{R}\setminus[-A,A]}2||f_i||_{L^\infty(\mathbb{R})}\cdot|\varphi_n(y)|dy
+\int_{-A}^A|f_i(x-y)-f_i(x_0-y)|\cdot|\varphi_n(y)|dy\\
\le&2\cdot\frac{\epsilon}{4}+\frac{\epsilon}{2}\cdot\int_\mathbb{R}|\varphi_n(y)|dy
=\epsilon.
\end{align*}
This shows that $f_i\ast\varphi_n$ is continuous.
It is clear that $f_i\ast\varphi_n$ is a real function.
By noting that $\mathcal{F}(f_i\ast\varphi_n)=\mathcal{F}(f_i)\cdot\mathcal{F}(\varphi_n)$
(see \cite[Chapters VI.2, VI.4]{Ka})
and the fact that $\supp\mathcal{F}(\varphi_n)\subset[-n,n]$,
we have $$\supp\mathcal{F}(f_i\ast\varphi_n)
\subset\supp\mathcal{F}(f_i)\cap\supp\mathcal{F}(\varphi_n)
\subset\supp\mathcal{F}(\varphi_n)\subset[-n,n].$$
Thus, $f_i\ast\varphi_n\in B_1V_{-n}^n$.

\medskip

For $f\in C(\mathbb{R},[0,1]^\mathbb{N})$, define
$F(f)=(F_n(f))_{n\in\mathbb{N}}\in\prod_{n\in\mathbb{N}}(B_1V_{-n}^n)^n$;
namely,
$$F((f_1,f_2,f_3,\cdots))=(f_1\ast\varphi_1;f_1\ast\varphi_2,f_2\ast\varphi_2;
f_1\ast\varphi_3,f_2\ast\varphi_3,f_3\ast\varphi_3;\cdots \cdots),$$
where $(f_1,f_2,f_3,\cdots)\in C(\mathbb{R},[0,1]^\mathbb{N})$.
Then we obtain a map
$F:C(\mathbb{R},[0,1]^\mathbb{N})\rightarrow M$.

Consider $F:X\rightarrow M$.
It suffices to show that the map $F$ is an embedding of the flow
$(X,\mathbb{R})$ into the flow $(M,\mathbb{R})$.

\medskip

Write $(X,\mathbb{R})=(X,(\phi_r)_{r\in\mathbb{R}})$ and $(M,\mathbb{R})=(M,(\tau_r)_{r\in\mathbb{R}})$.
Then for every $r\in\mathbb{R}$ and every $f=(f_i)_{i\in\mathbb{N}}\in X$,
we have the following:
$$F(\phi_r(f))=\big((\phi_r(f_i)\ast\varphi_n)_{1\le i\le n, i\in\mathbb{N}}\big)_{n\in\mathbb{N}},\,\;\,
\tau_r(F(f))=\big((\tau_r(f_i\ast\varphi_n))_{1\le i\le n, i\in\mathbb{N}}\big)_{n\in\mathbb{N}}.$$
Since it holds that
\begin{align*}
\phi_r(f_i)\ast\varphi_n(x)
&=\int_\mathbb{R}\phi_r(f_i)(x-t)\varphi_n(t)dt\\
&=\int_\mathbb{R}f_i(x+r-t)\varphi_n(t)dt\\
&=f_i\ast\varphi_n(x+r)\\
&=\tau_r(f_i\ast\varphi_n)(x)
\end{align*}
for all $x\in\mathbb{R}$,
we get that $F(\phi_r(f))=\tau_r(F(f))$.
This indicates that
$F:X\rightarrow M$ is $\mathbb{R}$-equivariant:
for any $r\in\mathbb{R}$ we have
$F\circ\phi_r=\tau_r\circ F$.

\medskip

It remains to check that
the map $F:X\rightarrow F(X)$ is a homeomorphism.

Take $f=(f_i)_{i\in\mathbb{N}}\in X$ and $n\in\mathbb{N}$.
Recall that the metrics on $X\subset C(\mathbb{R},[0,1]^\mathbb{N})$ and $B_1V_{-n}^n$
are denoted by $d$ and $\boldsymbol{d}$ respectively.
Fix $N\in\mathbb{N}$ and $i\in\mathbb{N}$. Then
for any $\epsilon>0$, we can find $A>0$ satisfying
$\int_{\mathbb{R}\setminus[-A,A]}|\varphi_n(t)|dt<\epsilon/4$
and $\delta>0$ such that for every $g=(g_i)_{i\in\mathbb{N}}\in X$
with $d(g,f)<\delta$ it holds that $|g_i(y)-f_i(y)|<\epsilon/2$ for all
$y\in[-A-N,A+N]$.
Thus, for each $x\in[-N,N]$ we have
\begin{align*}
|g_i\ast\varphi_n(x)-f_i\ast\varphi_n(x)|
\le&\int_\mathbb{R}|g_i(x-t)-f_i(x-t)|\cdot|\varphi_n(t)|dt\\
\le&\int_{\mathbb{R}\setminus[-A,A]}2\cdot|\varphi_n(t)|dt
+\int_{-A}^A|g_i(x-t)-f_i(x-t)|\cdot|\varphi_n(t)|dt\\
\le&2\cdot\frac{\epsilon}{4}+\frac{\epsilon}{2}\cdot\int_\mathbb{R}|\varphi_n(t)|dt
=\epsilon.
\end{align*}
Hence for every $\epsilon>0$ there exists $\delta>0$ such that for any
$g=(g_i)_{i\in\mathbb{N}}\in X$ with $d(g,f)<\delta$ it holds that
$\boldsymbol{d}(g_i\ast\varphi_n,f_i\ast\varphi_n)<\epsilon$.
This, together with the fact that
$F(X)$ is a subspace of $M=\prod_{n\in\mathbb{N}}(B_1V_{-n}^n)^n$ equipped with the product topology,
proves that $F:X\rightarrow F(X)$ is continuous.

To see that $F:X\rightarrow F(X)$ is one-to-one,
we take $f=(f_i)_{i\in\mathbb{N}}$ and $g=(g_i)_{i\in\mathbb{N}}$ in $X$
with $f\ne g$.
This implies that there is $i\in\mathbb{N}$
and some $N>0$ such that $||f_i-g_i||_{L^\infty([-N,N])}>0$.
Since we have by the Claim that
$||f_i\ast\varphi_n-f_i||_{L^\infty([-N,N])}\rightarrow0$
and $||g_i\ast\varphi_n-g_i||_{L^\infty([-N,N])}\rightarrow0$ as $n\rightarrow\infty$,
there exists $n\in\mathbb{N}$ such that $||f_i\ast\varphi_n-g_i\ast\varphi_n||_{L^\infty([-N,N])}>0$,
which implies that
$f_i\ast\varphi_n\neq g_i\ast\varphi_n$.
Therefore, $F$ is injective.

Since $X\subset C(\mathbb{R},[0,1]^\mathbb{N})$ is compact and
$F:X\rightarrow F(X)$ is continuous and one-to-one,
we obtain that
$F:X\rightarrow F(X)$ is a homeomorphism.
This completes the proof.
\end{proof}

\medskip

\section{Final remarks}

In this section, we discuss a previously known result
related to universal flows,
and compare it with our explicit construction.

\medskip

This is an abstract method, due to J. de Vries \cite[Theorem 2.10]{Vri},
which compactifies $(C(\mathbb{R},[0,1]^\mathbb{N}),\mathbb{R})$ using the theory of $C^*$-algebras.
The existence of a universal space follows.
Recall that $(C(\mathbb{R},[0,1]^\mathbb{N}),\mathbb{R})$ is a separable metric flow.
Denote $(X,\mathbb{R})=(C(\mathbb{R},[0,1]^\mathbb{N}),\mathbb{R})$.
Notice that $X$ is a completely regular Hausdorff space.
Denote by $C_u(X)$ the Banach algebra of all complex-valued bounded continuous
functions on $X$ with the supremum norm. By
a $C_1^*$-subalgebra of $C_u(X)$ we mean a closed subalgebra of $C_u(X)$
containing the constants and closed under complex conjugation.
For an $\mathbb{R}$-action on $X$ we can naturally consider the induced $\mathbb{R}$-action on $C_u(X)$ by
$(tf)(x)=f(tx)$ for any $t\in\mathbb{R}$, $f\in C_u(X)$, and $x\in X$. In general, this action is not continuous
on $\mathbb{R}\times C_u(X)$.
We denote by $UC(X)$ the set of all $\mathbb{R}$-uniformly continuous functions on $X$;
equivalently, $UC(X)$ exactly equals the set of all those functions $f\in C_u(X)$ satisfying that
the mapping $\mathbb{R}\times C_u(X)\ni(t,g)\mapsto tg\in C_u(X)$ is continuous at all
points of $\mathbb{R}\times\{f\}$.
One first verifies that the subset
$UC(X)$ of $C_u(X)$ is actually an $\mathbb{R}$-invariant $C_1^*$-subalgebra of $C_u(X)$.
Next we let $M_1'$ be the Gelfand representation of $UC(X)$
(see e.g., \cite[Chapter VIII.3]{Ka}).
By well known theorems (see e.g., \cite[Section VIII.3]{Ka},\cite[Chapter 43]{GRS}),
$M_1'$ is a compact Hausdorff space,
and there is a canonical
continuous mapping $\phi:X\to M_1'$.
To get that $\phi$ is an embedding,
we need to know that the algebra $UC(X)$
is large enough to separate points and closed subsets
of $X$ (see \cite[Proposition 1.2]{Vri} and \cite[Theorem 14.2.2]{Sem}).

However $M_1'$ may fail to be metrizable.
By a clever argument, J. de Vries
shows that there is a separable $\mathbb{R}$-invariant $C_1^*$-subalgebra of $UC(X)$
separating points and closed subsets of $X=C(\mathbb{R},[0,1]^\mathbb{N})$
(see \cite[Proposition 2.10]{Vri}), which implies that $C(\mathbb{R},[0,1]^\mathbb{N})$
equivariantly embeds into some compact metrizable flow $(M_1,\mathbb{R})$,
which is a factor of $M_1'$ (see \cite[Theorem 2.10]{Vri}).

Thus, $M_1$ is a desired compactification of $X$.
Notice that this method is nonconstructive:
the space $M_1$ is not explicit.
However,
the technique works
not only for $\mathbb{R}$-actions
but also for locally compact group actions.

\medskip

As the above approach
yields a compact metric (universal) space
$(M_1,\mathbb{R})$,
this space embeds into
$(C(\mathbb{R},[0,1]^\mathbb{N}),\mathbb{R})$ as well.
However, in contrast to $(M,\mathbb{R})$ which was explicitly constructed
as a subspace of $(C(\mathbb{R},[0,1]^\mathbb{N}),\mathbb{R})$,
an embedding of $(M_1,\mathbb{R})$ into
$(C(\mathbb{R},[0,1]^\mathbb{N}),\mathbb{R})$
is far from explicit.

\medskip

Finally, we pose a question
which is related to Theorem \ref{KB}.

\medskip

Let $\mathcal{C}$ be the family of all compact metric flows $(X,\mathbb{R})$
whose fixed point set $F(X,\mathbb{R})$ is homeomorphic to a subset of $\mathbb{R}$.

\begin{Question}
Can one construct an explicit
compact subflow $(P,\mathbb{R})$ of the Bebutov flow $(C(\mathbb{R},[0,1]),\mathbb{R})$
which is universal for $\mathcal{C}$?
\end{Question}

\medskip

\section*{References}

\medskip


\end{document}